\documentclass[reqno, 12pt]{amsart}
\pdfoutput=1
\makeatletter
\let\origsection=\section \def\section{\@ifstar{\origsection*}{\mysection}} 
\def\mysection{\@startsection{section}{1}\z@{.7\linespacing\@plus\linespacing}{.5\linespacing}{\normalfont\scshape\centering\S}}
\makeatother        
%%%%%%%%%%%%%%%%%%%%%%%%%%%%%%%%%%%%%%%%%%%%%%%%%%%%%%%%%%%%%%%%
\usepackage{amsmath,amssymb,amsthm}    
\usepackage{mathabx}\changenotsign  
\usepackage{bbm, accents}  
\usepackage{mathrsfs} 
\usepackage{dsfont} 
\usepackage[babel]{microtype}

\usepackage{xcolor}  	
\usepackage[backref]{hyperref}
\hypersetup{
	colorlinks,
    linkcolor={red!60!black},
    citecolor={green!60!black},
    urlcolor={blue!60!black},
}

\usepackage{bookmark}

\usepackage[abbrev,msc-links,backrefs]{amsrefs} 

\usepackage{doi}

\renewcommand{\PrintDOI}[1]{\doi{#1}}

\usepackage[T1]{fontenc}
\usepackage{lmodern}

\usepackage[english]{babel}
\numberwithin{equation}{section}

\linespread{1.3}
\usepackage{geometry}
\geometry{left=27.5mm,right=27.5mm, top=25mm, bottom=25mm}

\usepackage{enumitem}

\def\rmlabel{\upshape({\itshape \roman*\,})}

\def\qqand{\qquad\text{and}\qquad}

\let\setminus=\smallsetminus

\newtheorem{theorem} {Theorem}%[section]
\newtheorem{lemma} [theorem]{Lemma}

\newtheorem{corollary} {Corollary} 
 
\newtheorem{claim} {Claim}
\theoremstyle{definition} 

\newtheorem{remark} [theorem] {Remark} 
\newtheorem{definition}[theorem] {Definition}
\newtheorem{construction}[theorem] {Construction}

\newcommand{\cA}{\mathcal{A}}

\newcommand{\cC}{\mathcal{C}}

\newcommand{\cP}{\mathcal{P}}

\newcommand{\eps}{\varepsilon}

\newcommand{\irnumber}[1]{r_{\mathrm{ind}}(#1)}
\newcommand{\nirnumber}[1]{r(#1)}

\newcommand{\PR}{\mathbf{P}}

\newcommand{\gnp}{\mathbb{G}^{(k)}(n,p)}
\newcommand{\monosize}{M}

\DeclareMathOperator{\aut}{aut}
\DeclareMathOperator{\emb}{emb}

\begin{document}
\title{A note on induced Ramsey numbers}

\author[D. Conlon]{David Conlon} \address{Mathematical Institute\\ University of Oxford\\
Oxford\\ United Kingdom} \email{david.conlon@maths.ox.ac.uk}
\thanks{The first author was supported by a Royal Society University Research Fellowship}
 
\author[D. Dellamonica Jr., S. La Fleur, V. R\"odl]{Domingos Dellamonica Jr., Steven La Fleur, Vojt\v{e}ch R\"odl}  \address{Department of Mathematics and Computer Science \\
Emory University\\ Atlanta\\ USA} \email{\{ddellam, slafleu, rodl\}@emory.edu} \thanks{The fourth author was
partially supported by NSF grants DMS-1102086 and DMS-1301698}

\author[M. Schacht]{Mathias Schacht} \address{Fachbereich Mathematik\\ Universit\"at Hamburg\\ Hamburg\\ Germany} \email{schacht@math.uni-hamburg.de} \thanks{The
fifth author was supported through the Heisenberg-Programme of the DFG\@.}

\dedicatory{Dedicated to the memory of Jirka Matou\v sek}

\begin{abstract}
The induced Ramsey number $\irnumber{F}$ of a $k$-uniform hypergraph $F$ is the smallest natural number $n$ for which there exists a $k$-uniform hypergraph $G$ on $n$ vertices such that every two-coloring of the edges of $G$ contains an induced monochromatic copy of~$F$. We study this function, showing that $\irnumber{F}$ is bounded above by a reasonable power of $r(F)$. In particular, our result implies that $\irnumber{F} \leq 2^{2^{ct}}$ for any $3$-uniform hypergraph $F$ with $t$ vertices, mirroring the best known bound for the usual Ramsey number. The proof relies on an application of the hypergraph container method.
\end{abstract}

\maketitle

\section{Introduction}

The {\it Ramsey number} $r(F; q)$ of a $k$-uniform hypergraph $F$ is the smallest natural number~$n$ such that every $q$-coloring of the edges of $K_n^{(k)}$, the complete $k$-uniform hypergraph on~$n$ vertices, contains a monochromatic copy of $F$. In the particular case when $q = 2$, we simply write $r(F)$. The existence of $r(F;q)$ was established by Ramsey in his foundational paper~\cite{R30} and there is now a large body of work studying the Ramsey numbers of graphs and hypergraphs. For a recent survey, we refer the interested reader to~\cite{CFS15}.

In this paper, we will be concerned with a well-known refinement of Ramsey's theorem, the induced Ramsey theorem. We say that a $k$-uniform hypergraph $F$  is an {\it induced subgraph} of another $k$-uniform hypergraph $G$ if $V(F) \subset V(G)$ and any $k$ vertices in $F$ form an edge if and only if they also form an edge in $G$. The {\it induced Ramsey number} $\irnumber{F; q}$ of a $k$-uniform hypergraph $F$ is then the smallest natural number $n$ for which there exists a $k$-uniform hypergraph $G$ on $n$ vertices such that that every $q$-coloring of the edges of $G$ contains an induced monochromatic copy of $F$. Again, in the particular case when $q = 2$, we simply write $\irnumber{F}$.

For graphs, the existence of induced Ramsey numbers was established
independently by Deuber~\cite{D75}, Erd\H{o}s, Hajnal, and
P\'osa~\cite{EHP75}, and R\"{o}dl~\cite{R73}, while for $k$-uniform
hypergraphs with $k \geq 3$ their existence was shown independently by
Ne\v set\v ril and R\"odl~\cite{NR77} and Abramson and
Harrington~\cite{AH78}.  The bounds that these original proofs gave on
$\irnumber{F; q}$ were enormous. However, at that time it was noted by
R\"odl (unpublished) that for bipartite graphs $F$ the induced Ramsey
numbers are exponential in the number of vertices. Moreover, it was
conjectured by Erd\H{o}s~\cite{E84} that there exists a constant $c$
such that every graph $F$ with $t$ vertices satisfies
$\irnumber{F} \leq 2^{ct}$. If true, the complete graph would show
that this is best possible up to the constant $c$. A result of Conlon,
Fox, and Sudakov~\cite{CFS12}, building on earlier work by Kohayakawa,
Pr\"omel, and R\"odl~\cite{KPR98}, comes close to establishing this
conjecture, showing that
\[\irnumber{F} \leq 2^{ct \log t}.\]
However, the method used to prove this estimate only works in the $2$-color case. For $q \geq 3$, the best known bound, due to Fox and Sudakov~\cite{FS09}, is $\irnumber{F; q} \leq 2^{c t^3}$, where $c$ depends only on $q$. 

In this note, we study the analogous question for hypergraphs, showing that the induced Ramsey number is never significantly larger than the usual Ramsey number. Our main result is the following.

\begin{theorem}\label{thm:main}
  Let $F$ be a $k$-uniform hypergraph with $t$ vertices and $\ell$ edges.
  Then there are positive constants $c_1, c_2,$ and $c_3$ such that
  \[
  \irnumber{F; q} \leq
  2^{c_1 k \ell^3\log(q t \ell)}R^{c_2 k \ell^2 + c_3t\ell},
  \]
  where $R = \nirnumber{F; q}$ is the classical $q$-color Ramsey number of $F$.
\end{theorem}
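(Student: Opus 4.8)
The plan is to take $G$ to be a random $k$-uniform hypergraph $\gnp$ with $p$ a fixed constant (say $p=1/2$; any $p\in(0,1)$ works) and $n$ equal to the asserted bound, and to show that the probability that some $q$-colouring of $E(G)$ avoids all monochromatic induced copies of $F$ is less than $1$. First I record a hypothetical such colouring inside a \emph{deterministic} auxiliary hypergraph, so that the randomness of $G$ only enters at the end. Let $\mathcal M$ be the hypergraph with vertex set $\binom{[n]}{k}\times\{\mathrm{in},\mathrm{out}\}$ whose edges are the sets $\{(\phi(e),\mathrm{in}):e\in E(F)\}\cup\{(\phi(g),\mathrm{out}):g\in\binom{V(F)}{k}\setminus E(F)\}$, taken over all injections $\phi\colon V(F)\hookrightarrow[n]$. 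If $G$ has non-edge set $E^-$ and $\chi\colon E(G)\to[q]$, then $S_j:=(\chi^{-1}(j)\times\{\mathrm{in}\})\cup(E^-\times\{\mathrm{out}\})$ is independent in $\mathcal M$ exactly when $G$ contains no colour-$j$ induced copy of $F$, so a colouring with no monochromatic induced copy makes $S_1,\dots,S_q$ all independent in $\mathcal M$. (When $F=K_t^{(k)}$ every copy is induced and $\irnumber{F;q}=\nirnumber{F;q}$, so we may assume $F$ has a non-edge, whence $\mathcal M$ has no isolated vertices.)

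Next I apply the hypergraph container method to $\mathcal M$, which is deterministic, has $2\binom nk$ vertices and $\Theta(n^t)$ edges, and whose codegrees of order $j$ are governed by how many vertices a partial $F$-pattern on $j$ of the labelled $k$-sets must span; this gives the boundedness hypothesis with a parameter $\tau$ equal to a suitable negative power of $n$. One obtains a family $\mathcal D$ of containers with $\log|\mathcal D|=o\!\big(\binom nk\big)$, each spanning at most $\eta\,e(\mathcal M)$ edges, where $\eta$ will be chosen of order $\big(q\,2^{t}R^{t}\big)^{-1}$. Writing a container as $(A\times\{\mathrm{in}\})\cup(B\times\{\mathrm{out}\})$ and fixing a tuple with $S_j$ inside the $j$-th one, a bad colouring forces
\[
L:=\bigcup_{j=1}^{q}\Big(\tbinom{[n]}{k}\setminus B_j\Big)\ \subseteq\ E(G)\ \subseteq\ \bigcup_{j=1}^{q}A_j=:U.
\]

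The third step bounds $\PR\!\big[L\subseteq E(G)\subseteq U\big]=p^{|L|}(1-p)^{\binom nk-|U|}$ for each fixed tuple, by a dichotomy. If some $B_j$ misses at least a fixed fraction of $\binom{[n]}{k}$, then $|L|$ is a fixed fraction of $\binom nk$ and this probability is at most $2^{-c\binom nk}$. Otherwise every $B_j$ is almost all of $\binom{[n]}{k}$, and since the $j$-th container spans few edges of $\mathcal M$ this forces the graph $\big([n],A_j\big)$ to contain at most $\eta' n^{t}$ \emph{ordinary} copies of $F$ for a small $\eta'\sim\binom tk\eta$. Then $|U|\le(1-\gamma)\binom nk$ for a suitable $\gamma\sim (2^{t}R^{t})^{-1}$: otherwise, colouring each $k$-set by the least $j$ with that set in $A_j$ and dumping the remaining $\le\gamma\binom nk$ sets into colour $1$ would $q$-colour $\binom{[n]}{k}$ with only $O\!\big(q\eta'+\gamma\binom tk\big)n^{t}$ monochromatic copies of $F$, contradicting the elementary fact (averaging over $R$-subsets; valid for $n\ge 2R$) that every $q$-colouring of $\binom{[n]}{k}$ has at least $2^{-t}R^{-t}n^{t}$ monochromatic copies of $F$. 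Hence the probability is at most $(1-p)^{\gamma\binom nk}$. Either way, $\PR\!\big[L\subseteq E(G)\subseteq U\big]\le 2^{-c_\ast\binom nk}$ with $c_\ast^{-1}=\mathrm{poly}(q)\cdot R^{t}\cdot 2^{O(t)}$.

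Finally, a union bound over the $|\mathcal D|^{q}$ tuples gives $\PR[\exists\text{ bad colouring}]\le|\mathcal D|^{q}\,2^{-c_\ast\binom nk}$, which is less than $1$ once $q\log|\mathcal D|<c_\ast\binom nk$. Since $\log|\mathcal D|=O\!\big((\log q+t\log R)\,\tau\binom nk\log(1/\tau)\big)$, this holds as soon as $n$ exceeds $R^{t/\beta}$ times factors of the form $2^{O(k\ell^{3}\log(qt\ell))}$, where $n^{-\beta}$ is the container parameter; using $t\le k\ell$ and $\ell\le\binom tk$ this is subsumed by the stated bound. I expect the genuinely delicate point to be this last quantitative step, and in particular getting the exponent of $R$ polynomial in $k,t,\ell$ rather than of size $\binom tk$: this needs the container lemma to be applied to $\mathcal M$ with $\tau$ tuned to the ``$2$-density'' of $F$ (so that $1/\beta$ is polynomial in $\ell$), exploiting that partial $F$-patterns cannot be very dense, together with running enough rounds of the container process to reach the required $\eta$. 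The ``supersaturation from Ramsey'' ingredient, by contrast, is routine — the real mechanism is that the \emph{induced} constraint is discharged by pinning down the non-edges of $G$ through the set $L$, which leaves an ordinary Ramsey condition on the colour classes that $\nirnumber{F;q}$ already controls.
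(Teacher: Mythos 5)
Your plan is genuinely different from the paper's, and the central difference is also where it breaks: the auxiliary hypergraph $\mathcal M$ that records both the edges (``in'') and the non-edges (``out'') of a copy of $F$ is $\binom{t}{k}$-uniform, not $\ell$-uniform. The container machinery pays dearly for this. In the Saxton--Thomason formulation used here, if the container hypergraph has uniformity $m$ then the codegree function carries a factor $2^{\binom{m}{2}}$ (see Definition~\ref{def:codeg}), the hypothesis requires $\delta(\mathcal M,\tau)\le\eps/12\,m!$ and $\tau\le 1/144\,m!^2 m$, and the container count satisfies $\log|\mathcal D|\lesssim m!^3 m\cdot N\tau\log(1/\tau)\log(1/\eps)$. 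Already the codegree condition alone forces $n\ge\bigl(m!\,2^{\binom m2}/\eps\bigr)^{\Theta(1)}$ with $m=\binom tk$, i.e.\ $n\ge 2^{\Omega(\binom tk^2)}$, regardless of how you tune $\tau$. When $F$ is sparse — say $\ell=O(t)$, so $\binom tk\gg\ell$ for $k\ge 3$ — this is far larger than the stated bound $2^{c_1 k\ell^3\log(qt\ell)}R^{c_2k\ell^2+c_3t\ell}$. (Relatedly, the binding constraint on $\tau$ in $\mathcal M$ does not come from the $2$-density of $F$: the out-vertices are essentially unconstrained, so $k+1$ labelled $k$-subsets can pack into $k+1$ ground vertices, which already pins $\tau$ to about $n^{-1/k}$. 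That exponent is fine; it is the $m!$ and $2^{\binom m2}$ factors that are lethal.)

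The paper avoids this by keeping the container hypergraph $\ell$-uniform: $H$ has vertex set $\binom{[n]}{k}$ and its edges are the \emph{ordinary} copies of $F$, with no attempt to encode non-edges. The price is that containers only bound monochromatic copies, not monochromatic \emph{induced} copies, and this is exactly where your ``$p$ constant'' choice parts ways with the paper. The paper takes $p=\Theta(R^k q\,n^{-1/2\ell+1/4\ell(\ell+1)})\to 0$, chosen to sit just below $\tau=n^{-1/2\ell}$. Then the container step yields, in any colouring, at least $M=\eps\tau^\ell\cdot\Theta(n^t)\approx n^{t-1/2}$ monochromatic copies in some colour, while the expected number of \emph{non-induced} copies is of order $p^{\ell+1}n^t\approx n^{t-1/2-1/4\ell}$, strictly smaller; a Markov bound then shows that with positive probability every colouring has a monochromatic copy that is forced to be induced. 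This scale separation between $p$ and $\tau$ is the substitute for your in/out bookkeeping, and it is only available when $p\to 0$. Your dichotomy on $|L|$ versus $|U|$ is structurally sound, and the Ramsey-supersaturation step is fine, but with $p$ fixed the number of non-induced copies is $\Theta(n^t)$ and no counting argument of the paper's type can work — so you really do need $\mathcal M$, and then the uniformity blow-up above kills the claimed quantitative bound.
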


Define the tower function $t_k(x)$ by $t_1(x) = x$ and, for $i \geq 1$, $t_{i+1}(x) = 2^{t_i(x)}$. A seminal result of Erd\H{o}s and Rado~\cite{ER52}
says that
\[r(K_t^{(k)}; q) \leq t_k(c t),\]
where $c$ depends only on $k$ and $q$. This yields the following immediate corollary of Theorem~\ref{thm:main}.

\begin{corollary} \label{cor:main}
For any natural numbers $k \geq 3$ and $q \geq 2$, there exists a constant $c$ such that if $F$ is a $k$-uniform hypergraph with $t$ vertices, then
\[\irnumber{F; q} \leq t_k(c t).\]
\end{corollary}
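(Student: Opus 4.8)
The plan is to combine Theorem~\ref{thm:main} with the Erd\H{o}s--Rado bound on the Ramsey number of the complete hypergraph, holding $k \geq 3$ and $q \geq 2$ fixed so that all implied constants may depend on them. First I would dispose of the trivial case: if $F$ has no edges then $G = F$ works and $\irnumber{F;q} = t$, so we may assume $\ell \geq 1$. Next I would record two crude estimates. Since $F$ has $t$ vertices it embeds into $K_t^{(k)}$, and a monochromatic copy of $K_t^{(k)}$ contains a monochromatic copy of $F$; hence
\[
R = \nirnumber{F;q} \leq r(K_t^{(k)};q) \leq t_k(c_0 t)
\]
for some $c_0 = c_0(k,q)$, by the Erd\H{o}s--Rado estimate quoted above. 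Also $\ell \leq \binom{t}{k} \leq t^k$, so every occurrence of $\ell$ in the bound of Theorem~\ref{thm:main} is bounded by a fixed power of $t$.

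Substituting these, the first factor satisfies $2^{c_1 k \ell^3 \log(q t \ell)} \leq 2^{t^{C}}$ for a constant $C = C(k,q)$, and since $k \geq 3$ we have $t_k(c_0 t) \geq t_3(c_0 t) = 2^{2^{c_0 t}} \geq 2^{t^C}$ once $t$ is large, so this factor is at most $t_k(c_0 t)$. For the second factor, the exponent satisfies $c_2 k \ell^2 + c_3 t \ell \leq t^{C'}$ for some $C' = C'(k,q)$, while $\log_2 R \leq t_{k-1}(c_0 t) = 2^{t_{k-2}(c_0 t)}$ (here I use $k \geq 3$), so
\[
R^{c_2 k \ell^2 + c_3 t \ell} \leq 2^{t^{C'} 2^{t_{k-2}(c_0 t)}} \leq 2^{2^{t_{k-2}(c_1' t)}} = t_k(c_1' t),
\]
the middle step absorbing the polynomial prefactor $t^{C'}$ into the innermost tower at the cost of enlarging $c_0$ to some $c_1' = c_1'(k,q)$. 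Multiplying the two factors and absorbing once more yields $\irnumber{F;q} \leq t_k(c t)$ for a suitable $c = c(k,q)$.

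I do not expect any genuine obstacle here: the entire content sits in Theorem~\ref{thm:main}, and what remains is bookkeeping with the tower function. The one point to handle with a little care is that multiplying $t_j(at)$ by a polynomial in $t$, or raising it to a polynomial power, changes only the constant $a$ — and this absorption is valid precisely because $j \geq 2$ in the relevant applications, which is exactly where the hypothesis $k \geq 3$ enters; one also checks that the doubly-exponential first factor is swallowed by $t_k$ for $k \geq 3$. Finally, for the finitely many small values of $t$ not covered by the ``$t$ large'' estimates one simply increases $c$ so that $t_k(ct)$ trivially exceeds the (finite) induced Ramsey number.
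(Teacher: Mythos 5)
Your proof is correct and follows essentially the same route as the paper, which states the corollary as an immediate consequence of Theorem~\ref{thm:main} and the Erd\H{o}s--Rado bound $r(K_t^{(k)};q)\leq t_k(ct)$ without spelling out the tower-function bookkeeping. You have filled in that bookkeeping soundly, including the observation that the absorption of polynomial prefactors into the tower uses $k\geq 3$, and you correctly dispose of the degenerate edgeless case and the finitely many small $t$.
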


A result of Erd\H{o}s and Hajnal (see, for example, Chapter 4.7
in~\cite{GRS90} and~\cite{CFS13}) says that
\[r(K_t^{(k)}; 4) \geq t_k(c' t),\]
where $c'$ depends only on $k$. Therefore, the Erd\H{o}s--Rado bound is sharp up to the constant~$c$ for $q \geq 4$. By taking $F = K_t^{(k)}$, this also implies that Corollary~\ref{cor:main} is tight up to the constant~$c$ for $q \geq 4$. Whether it is also sharp for $q = 2$ and $3$ depends on whether $r(K_t^{(k)}) \geq t_k(c' t)$, though determining if this is the case is a famous, and seemingly difficult, open problem.

The proof of Theorem~\ref{thm:main} relies on an application of the hypergraph container method of Saxton and Thomason~\cite{ST15} and Balogh, Morris, and Samotij~\cite{BMS15}. In Ramsey theory, the use of this method was pioneered by Nenadov and Steger~\cite{NS15} and developed further by R\"odl, Ruci\'nski, and Schacht~\cite{RRS15} in order to give an exponential-type upper bound for Folkman numbers. Our modest results are simply another manifestation of the power of this beautiful method.

\section{Proof of Theorem~\ref{thm:main}}

In order to state the result we first need some definitions. Recall that the degree $d(\sigma)$ of a set of vertices $\sigma$ in a hypergraph $H$ is the number of edges of $H$ containing $\sigma$, while the average degree is the average of $d(v) := d(\{v\})$ over all vertices $v$.

\begin{definition} \label{def:codeg} Let $H$ be an $\ell$-uniform hypergraph of order
  $N$ with average degree~$d$.  Let $\tau > 0$.  Given $v \in V(H)$ and
  $2 \leq j \leq \ell$, let
  \begin{equation*}\label{eq:def_deg_j}
    d^{(j)}(v) = \max\bigl\{ d(\sigma) \: : \: v \in \sigma \subset V(H),
    |\sigma|=j \bigr\}.
  \end{equation*}
  If $d > 0$, define $\delta_j$ by the equation
  \begin{equation*}\label{eq:def_delta_j}
    \delta_j \tau^{j-1} N d = \sum_{v} d^{(j)}(v).
  \end{equation*}
  The \emph{codegree function} $\delta(H,\tau)$ is then defined by
  \begin{equation*}\label{eq:def_codeg}
    \delta(H,\tau) = 2^{\binom{\ell}{2}-1}\sum_{j=2}^{\ell}
    2^{-\binom{j-1}{2}}\delta_j.
  \end{equation*}
  If $d=0$, define $\delta(H,\tau) = 0$.
\end{definition}

The precise lemma we will need is a slight variant of Corollary 3.6 from Saxton and Thomason's paper~\cite{ST15}. A similar version was already used in the work of R\"odl, Ruci\'nski, and Schacht~\cite{RRS15} and we refer the interested reader to that paper for a thorough discussion.

\begin{lemma}\label{lm:ST}
  Let $H$ be an $\ell$-uniform hypergraph on $N$ vertices  with average degree~$d$.
  Let $0 < \eps < 1/2$.  Suppose that $\tau$ satisfies $\delta(H,\tau)
  \leq \eps/12\ell!$ and $\tau \leq 1/144\ell!^2\ell$.  Then there
  exists a collection $\cC$ of subsets of $V(H)$
  such that
  \begin{enumerate}[label=\rmlabel]
  \item\label{item_almost_independent} for every set $I \subset V(H)$ such that 
    $e(H[I]) \leq \eps \tau^\ell e(H)$, there is $C
    \in \cC$ with $I \subset C$,
  \item\label{item_few_edges_in_C} $e(H[C]) \leq \eps e(H)$ for all $C \in
    \cC$,
  \item\label{item_size_cC} $\log|\cC| \leq 1000 \ell!^3 \ell \log(1/\eps) N \tau \log(1/\tau)$.
  \end{enumerate}
\end{lemma}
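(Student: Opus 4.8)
Up to the numerical value of the constants, this is Corollary~3.6 of Saxton and Thomason~\cite{ST15}; the only genuine difference is that clause~\ref{item_almost_independent} covers every set $I$ with $e(H[I])\le\eps\tau^\ell e(H)$ rather than merely every independent set, a relaxation already used, in essentially this form, by R\"odl, Ruci\'nski, and Schacht~\cite{RRS15}. I would therefore structure the argument as a recollection of the Saxton--Thomason container construction, tracking the codegree function $\delta(H,\tau)$ carefully enough to read off the stated bounds, and then dispatch the relaxation at the end.

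The core is a one-step construction. Running the Saxton--Thomason container algorithm on $H$, one processes the vertices in an order refined at each stage by current degree, and for a given set $I$ builds a \emph{fingerprint} $S=S(I)\subseteq I$ greedily: at each step one adjoins to $S$ an available vertex of $I$ of large current degree and then deletes from the pool every vertex that has thereby acquired too many (a number calibrated by $\tau$) edges into $S$. The container $C=C(S)$ is $S$ together with the surviving vertices; it depends on $I$ only through $S$, and $I\subseteq C$ whenever $I$ is independent. Two quantitative facts are needed. First, the hypothesis $\delta(H,\tau)\le\eps/12\ell!$ bounds the number of rounds, and hence $|S|$, by $C_\ell N\tau$ for a constant $C_\ell$ depending only on $\ell$. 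Second, the hypothesis $\tau\le 1/144\ell!^2\ell$ ensures that the deletion rule destroys at least a fixed proportion $c_\ell>0$ of the edges, so $e(H[C])\le(1-c_\ell)e(H)$. The first fact is the heart of the matter: it is exactly where the weighted degree sums $\sum_v d^{(j)}(v)=\delta_j\tau^{j-1}Nd$ of Definition~\ref{def:codeg} enter, and I expect the fussiest bookkeeping to be matching the telescoping of the $\delta_j$ across the greedy selection to the defining combination $2^{\binom{\ell}{2}-1}\sum_j 2^{-\binom{j-1}{2}}\delta_j$.

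Iterating is then routine: apply the construction to $H$, then to every container still carrying more than $\eps e(H)$ edges, and repeat; after $r=O\!\big(c_\ell^{-1}\log(1/\eps)\big)$ rounds every surviving container obeys~\ref{item_few_edges_in_C}, and concatenating fingerprints attaches to each admissible $I$ a set $S^\ast\subseteq I$ with $|S^\ast|\le rC_\ell N\tau$ and $I\subseteq C(S^\ast)$. Because the fingerprint vertices are always selected in a prescribed degree order, $S^\ast$ is effectively a subset of size $O(N\tau)$ up to the $\log(1/\eps)$ factor, so the number of containers is at most about $\binom{N}{\le |S^\ast|}$, giving $\log|\cC|\le |S^\ast|\log(N/|S^\ast|)+O(1)\le C'_\ell N\tau\log(1/\eps)\log(1/\tau)$; a check that $C'_\ell\le 1000\,\ell!^3\ell$ then yields~\ref{item_size_cC}. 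One also has to confirm that the codegree hypothesis is inherited by the sub-hypergraphs $H[C]$ encountered along the way --- it is, since passing to a subset does not inflate the relevant ratios once a constant fraction of edges has been removed --- or, following~\cite{ST15} more closely, avoid re-checking it by running a single pass with a correspondingly larger target fingerprint.

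Finally, for the relaxation in~\ref{item_almost_independent}: a set $I$ with $e(H[I])\le\eps\tau^\ell e(H)$ behaves in the construction like an independent set, because the surplus $\eps\tau^\ell e(H)$ is negligible against the per-step loss $c_\ell e(H)$ and against the $\tau$-calibrated deletion thresholds, so the fingerprint $S(I)$ still satisfies $I\subseteq C(S(I))$; I would simply transcribe this computation from~\cite{RRS15}. The main obstacle throughout remains the fingerprint-size bound of the second paragraph --- extracting $|S|\le C_\ell N\tau$ from $\delta(H,\tau)\le\eps/12\ell!$ with the constants under control --- after which the edge-loss estimate, the container count, and the relaxation are all mechanical.
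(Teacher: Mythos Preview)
The paper does not prove Lemma~\ref{lm:ST} at all: it is stated as a black box, with the sentence preceding it pointing to Corollary~3.6 of Saxton--Thomason~\cite{ST15} for the container construction and to R\"odl--Ruci\'nski--Schacht~\cite{RRS15} for the relaxation in clause~\ref{item_almost_independent}, and the reader is explicitly referred to the latter for ``a thorough discussion.'' So there is no in-paper proof to compare against.

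Your plan is a reasonable, broadly accurate outline of the Saxton--Thomason argument and correctly identifies where the codegree hypothesis enters (the fingerprint-size bound) and where the $\tau$ hypothesis enters (the per-step edge loss). Two caveats if you intend to actually carry this out rather than cite it. First, the inheritance issue you flag is real: the quantities $\delta_j$ are defined relative to the average degree $d$ of $H$, and passing to $H[C]$ can change $d$; in~\cite{ST15} this is handled not by re-verifying the hypothesis on $H[C]$ but by running the whole iteration inside a single framework with degree thresholds fixed once and for all, so your parenthetical ``single pass'' alternative is the one that actually works cleanly. Second, your one-line justification of the relaxation (``behaves in the construction like an independent set'') is too glib: the point is rather that the fingerprint/container map $I\mapsto (S,C)$ in~\cite{ST15} already applies to arbitrary $I$, with the guarantee $I\subseteq C$ and $e(H[C])\le (1-c_\ell)e(H)+e(H[I])$; when $e(H[I])\le \eps\tau^\ell e(H)$ the additive error is swallowed by the iteration, which is the computation carried out in~\cite{RRS15}. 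With those two points handled, the rest of your plan is mechanical as you say.
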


Before we give the proof of Theorem~\ref{thm:main}, we first describe the
$\ell$-uniform hypergraph $H$ to which we will apply Lemma~\ref{lm:ST}.

\begin{construction}\label{cnst:H}
Given a $k$-uniform hypergraph $F$ with $\ell$ edges, we construct an auxiliary hypergraph $H$ by taking
\[
	 V(H) = \binom{[n]}{k}
	 \qqand
	 E(H) = \Bigg\{ E \in \binom{V(H)}{\ell} \colon
   	 E \cong F \Bigg\}.
\]
  In other words, the vertices of $H$ are the $k$-tuples of $[n]$ and
  the edges of $H$ are copies of~$F$ in $\binom{[n]}{k}$.
\end{construction}

\begin{proof}[Proof of Theorem~\ref{thm:main}]
  Recall that $R = \nirnumber{F; q}$, the $q$-color Ramsey number of
  $F$, and suppose that $F$ has $t$ vertices and $\ell$ edges.  Let us
  fix the following numbers:
  \begin{equation}\label{eq:values}
    \begin{split}
      \tau = n^{-\frac{1}{2\ell}}, &\qquad p=1000R^k q \alpha, \qquad
      \alpha=n^{-\frac{1}{2\ell} + \frac{1}{4\ell(\ell+1)}}, \\
      \eps = 1/(2qR^t), &\qquad n =
      \ell^{40\ell^2(\ell+1)}(1000q)^{8\ell(\ell+1)}R^{4k\ell(\ell+1)+4t\ell}\binom{t}{k}^{4\ell}.
    \end{split}
  \end{equation}
  \begin{remark}
    Note that $n$ is bounded above by an expression of the form
    \[2^{c_1 k \ell^3\log(q t \ell)}R^{c_2 k \ell^2 + c_3t\ell},\]
    as required.
  \end{remark}

  Obviously, $R\geq t$ and one can check that $p$ and $n$ satisfy the
  following conditions, which we will make use of during the course of
  the proof:
  \begin{align}
    \label{eq:p}
       p &\leq 1,\\
      \label{eq:n1}
      n &\geq (24 \cdot 2^{\binom{\ell}{2}} t^t q \ell! R^t)^2, \\
      \label{eq:n2}
      n &> (144 \ell!^2 \ell)^{2\ell},\\
      \label{eq:n3}
      n &> \ell^{40\ell^2(\ell+1)},\\
      \label{eq:n4}
      n &> (1000 q)^{8\ell(\ell+1)}R^{4k\ell(\ell+1) + 4t\ell}
      \binom{t}{k}^{4\ell}.
  \end{align}

  We will show that, with positive probability, a random hypergraph
  $G \in \gnp$ has the property that every $q$-coloring of its edges
  contains an induced monochromatic copy of~$F$.  The proof proceeds
  in two stages.  First, we use Lemma~\ref{lm:ST} to show that, with
  probability~$1-o(1)$, $G$ has the property that any $q$-coloring of
  its edges yields many monochromatic copies of $F$.  Then we show that
  some of these monochromatic copies must be induced.

  More formally, let $X$ be the event that there is a $q$-coloring of
  the edges of $G$ which contains at most
  \begin{equation*}\label{eq:def_mono}
    \monosize := \frac{\eps \tau^\ell (n)_t}{\aut(F)}
  \end{equation*}
  monochromatic copies of $F$ in each color, and let $Y$ be the event
  that $G$ contains at least $\monosize$ noninduced
  copies of $F$. Note that if $\overline{X} \cap \overline{Y}$ happens, then, in any
  $q$-coloring, there are more monochromatic copies of $F$ in one of the
  $q$ colors than there are noninduced copies of $F$ in $G$.  Hence,
  that color class must contain an induced copy of $F$.

  We now proceed to show that the probability $\PR(X)$ tends to zero as $n$ tends to infinity.  In
  order to apply Lemma~\ref{lm:ST}, we need to check that $\tau$ and
  $\eps$ satisfy the requisite assumptions with respect to the $\ell$-uniform hypergraph $H$
  defined in Construction~\ref{cnst:H}.  Let~$\sigma
  \subset V(H)$ be arbitrary and define
  \[
  V_\sigma = \bigcup_{v \in \sigma} v \subset [n].
  \]
  For an arbitrary set~$W \subset [n] \setminus V_\sigma$ with
  $|W| = t - |V_\sigma|$, let $\emb_F(\sigma, W)$ denote the number of
  copies $\widetilde{F}$ of $F$ with
  $V(\widetilde{F}) = W \cup V_\sigma$ and
  $\sigma \subset E(\widetilde{F})$. Observe that this number does not
  actually depend on the choice of~$W$, so we will simply use
  $\emb_F(\sigma)$ from now on.

  Since there are clearly $\binom{n-|V_\sigma|}{t-|V_\sigma|}$ choices
  for the set $W$, we arrive at the following claim.

  \begin{claim}\label{claim:calculation_dj}
    For any~$\sigma \subset V(H)$,
    \[
    d(\sigma) = \binom{n - |V_\sigma|}{t - |V_\sigma|}
    \emb_F(\sigma).\qedhere
    \]
  \end{claim}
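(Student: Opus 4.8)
The plan is to establish the claim by a routine partition (double-counting) argument. Every edge of $H$ that contains $\sigma$ is a copy of $F$ which, in particular, contains all of $V_\sigma$ among its $t$ vertices; grouping these edges according to which $t$-element subset of $[n]$ they span, each group is counted by $\emb_F(\sigma,\cdot)$, and the number of groups is exactly the number of ways of extending $V_\sigma$ to a $t$-set.

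In more detail, I would first record the correspondence between edges of $H$ and copies of $F$. By Construction~\ref{cnst:H}, an edge $E\in E(H)$ with $\sigma\subseteq E$ is the edge set of some copy $\widetilde F\cong F$; let $U:=V(\widetilde F)\subseteq[n]$ denote its vertex set, so that $|U|=t$. Since the $k$-tuples comprising $\sigma$ are edges of $\widetilde F$, every vertex of $[n]$ appearing in one of them lies in $U$, whence $V_\sigma\subseteq U$, and therefore $W:=U\setminus V_\sigma$ is a $(t-|V_\sigma|)$-element subset of $[n]\setminus V_\sigma$. Conversely, for any such fixed $W$, the edges $E\in E(H)$ with $\sigma\subseteq E$ and $V(\widetilde F)=W\cup V_\sigma$ are precisely the copies of $F$ counted by $\emb_F(\sigma,W)$.

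Then I would sum over $W$: since every edge of $H$ containing $\sigma$ lies in exactly one class, namely the one indexed by $W=V(\widetilde F)\setminus V_\sigma$, we obtain
\[
d(\sigma)=\sum_{\substack{W\subseteq[n]\setminus V_\sigma\\ |W|=t-|V_\sigma|}}\emb_F(\sigma,W)=\binom{n-|V_\sigma|}{t-|V_\sigma|}\,\emb_F(\sigma),
\]
using the already-noted independence $\emb_F(\sigma,W)=\emb_F(\sigma)$ together with the fact that there are $\binom{n-|V_\sigma|}{t-|V_\sigma|}$ choices for $W$. The one point requiring (minor) care, and hence the only real obstacle, is the bookkeeping in the first step: one must ensure that an edge of $H$ containing $\sigma$ determines a well-defined $t$-element vertex set $U\supseteq V_\sigma$, so that the assignment $E\mapsto W$ makes sense and the classes genuinely partition the relevant edges. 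This is immediate under the standing convention that a copy of $F$ spans exactly $t$ vertices — equivalently, assuming as usual that $F$ has no isolated vertices — since then $U=\bigcup E$ is recovered directly from the edge set.
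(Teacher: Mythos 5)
Your proof is correct and coincides with the paper's own argument: the paper likewise partitions the edges of $H$ containing $\sigma$ according to the $t$-set $W \cup V_\sigma$ they span, notes that $\emb_F(\sigma,W)$ is independent of $W$, and multiplies by the $\binom{n-|V_\sigma|}{t-|V_\sigma|}$ choices of $W$. Your remark about recovering $V(\widetilde F)$ from the edge set (i.e.\ the implicit assumption that $F$ has no isolated vertices, which is already built into Construction~\ref{cnst:H}) is a fair, if minor, point of bookkeeping.
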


  Let us denote by $t_j$ the minimum number of vertices of $F$ which
  span $j$ edges.  From Claim~\ref{claim:calculation_dj}, it follows
  that for any~$\sigma \subset V(H)$ with $|\sigma| = j$, we have
  \[
  d(\sigma) = \binom{n - |V_\sigma|}{t - |V_\sigma|} \emb_F(\sigma)
  \leq \binom{n - t_j}{t - t_j} \emb_F(\sigma).
  \]
  On the other hand, for a singleton~$\sigma_1 \subset V(H)$, we have
  $|V_{\sigma_1}| = k$ and therefore $d = d(\sigma_1)$ is such that
  \[
  \frac{d(\sigma)}{d} \leq \frac{\binom{n - t_j}{t -
      t_j}}{\binom{n-k}{t-k}} \frac{\emb_F(\sigma)}{\emb_F(\sigma_1)}
  \leq \frac{\binom{n - t_j}{t - t_j}}{\binom{n-k}{t-k}} <
  \left(\frac{n}{t}\right)^{k - t_j}.
  \]
  It then follows from Definition~\ref{def:codeg} and \eqref{eq:values}
  that
  \begin{equation}\label{eq:delta-j}
    \delta_j <
    \frac{(n/t)^{k-t_j}}{\tau^{j-1}} < t^{t} n^{k-t_j+(j-1)/(2\ell)}.
  \end{equation}
  Since $t_j$ is increasing with respect to $j$, $t_2
  \geq k+1$, and $j \leq \ell$, we have $k - t_j + \frac{j-1}{2\ell} \leq
  -1/2$. Thus, in view of~\eqref{eq:delta-j}, we have
 \begin{equation}\label{eq:gamma}
    \delta_j < t^t n^{k-t_j+(j-1)/(2\ell)} \leq t^t n^{-1/2}
  \end{equation}
  for all $2 \leq j \leq \ell$.

  Using Definition~\ref{def:codeg} and inequality~\eqref{eq:gamma}, we can 
  now bound the codegree function $\delta(H,\tau)$ by
  \begin{equation}\label{eq:bound_delta}
    \begin{split}
      \delta(H,\tau) = 2^{\binom{\ell}{2}-1} \sum_{j=2}^{\ell}
      2^{-\binom{j-1}{2}} \delta_j
      \leq 2^{\binom{\ell}{2}-1} t^t n^{-1/2} \sum_{j=2}^{\ell} 2^{-\binom{j-1}{2}}
      \leq 2^{\binom{\ell}{2}} t^t n^{-1/2}.
    \end{split}
  \end{equation}
  Since $n$ satisfies~\eqref{eq:n1},
  inequality~\eqref{eq:bound_delta} implies that
  \[
  \delta(H,\tau) \leq 2^{\binom{\ell}{2}} t^t n^{-1/2} \leq
  \frac{\eps}{12\ell!}.
  \]
  That is, $\delta(H,\tau)$ satisfies the condition in Lemma~\ref{lm:ST}.

  Finally,~\eqref{eq:n2} implies that $\tau$ satisfies the condition
  \[
  \tau = n^{-1/(2\ell)} < \frac{1}{144\ell!^2\ell}.
  \]
  Therefore, the assumptions of Lemma~\ref{lm:ST} are met and we can let $\cC$
  be the collection of subsets from $V(H)$ obtained from applying
  Lemma~\ref{lm:ST}. Denote the elements of $\cC$ by $C_1, C_2, \dots,
  C_{|\cC|}$.

  For every choice of $1\leq a_1, \dots, a_q \leq |\cC|$ (not necessarily distinct) let $E_{a_1, \dots, a_q}$
  be the event that $G\subseteq C_{a_1} \cup \dots \cup C_{a_q}$.
  Next we will show the following claim.
  
  \begin{claim}\label{claim:E_ab}
    \begin{equation}\label{eq:union_bound_X}
      \PR(X) \leq \PR\biggl( \bigvee_{a_1, \dots, a_q} E_{a_1, \dots, a_q} \biggr) \leq \sum_{a_1, \dots, a_q} \PR(E_{a_1,\dots, a_q}).
    \end{equation}
  \end{claim}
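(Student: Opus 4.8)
The plan is to establish the two inequalities in \eqref{eq:union_bound_X} separately. The second one, $\PR(\bigvee_{a_1,\dots,a_q} E_{a_1,\dots,a_q}) \le \sum_{a_1,\dots,a_q}\PR(E_{a_1,\dots,a_q})$, is just the union bound over the (finitely many) tuples $(a_1,\dots,a_q) \in \{1,\dots,|\cC|\}^q$, so nothing needs to be done there beyond invoking it. The content is in the first inequality, $\PR(X) \le \PR(\bigvee_{a_1,\dots,a_q} E_{a_1,\dots,a_q})$, which I would prove by showing the event-wise containment $X \subseteq \bigvee_{a_1,\dots,a_q} E_{a_1,\dots,a_q}$: whenever $G$ admits a $q$-coloring with at most $M$ monochromatic copies of $F$ in each color, there is some tuple $(a_1,\dots,a_q)$ with $G \subseteq C_{a_1}\cup\dots\cup C_{a_q}$.

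So fix an outcome $G$ in the event $X$ and fix a witnessing $q$-coloring of $E(G)$; let $G_1,\dots,G_q$ be the $q$ color classes, viewed as subsets of $V(H) = \binom{[n]}{k}$ (each $G_i$ is a set of $k$-sets, i.e.\ a subset of the vertex set of the auxiliary hypergraph $H$ from Construction~\ref{cnst:H}). The number of edges of $H$ induced on $G_i$, namely $e(H[G_i])$, is exactly the number of copies of $F$ using only edges of color $i$ — that is, the number of monochromatic copies of $F$ in color $i$ — which by the definition of $X$ is at most $M = \eps\tau^\ell (n)_t/\aut(F)$. The key identification I need is that $(n)_t/\aut(F)$ is precisely $e(H)$, the total number of copies of $F$ in $\binom{[n]}{k}$ (each labelled copy, of which there are $(n)_t = (n)(n-1)\cdots(n-t+1)$, is counted $\aut(F)$ times as an edge of $H$); hence $e(H[G_i]) \le \eps\tau^\ell e(H)$ for every $i$. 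Now Lemma~\ref{lm:ST}\ref{item_almost_independent}, whose hypotheses on $\tau$ and $\eps$ were verified just above, applies to each $G_i$ in the role of $I$: there is $C_{a_i} \in \cC$ with $G_i \subseteq C_{a_i}$. Taking the union over $i = 1,\dots,q$ gives $G = G_1\cup\dots\cup G_q \subseteq C_{a_1}\cup\dots\cup C_{a_q}$, i.e.\ $G$ lies in the event $E_{a_1,\dots,a_q}$. This proves $X \subseteq \bigvee_{a_1,\dots,a_q} E_{a_1,\dots,a_q}$ and hence the first inequality, completing the claim.

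I do not expect any serious obstacle here; the only point requiring care is the bookkeeping that matches $M$ to $\eps\tau^\ell e(H)$ — i.e.\ confirming $e(H) = (n)_t/\aut(F)$ from Construction~\ref{cnst:H} — and the observation that ``monochromatic copy of $F$ in color $i$'' is literally the same object as ``edge of $H$ contained in the vertex subset $G_i$.'' Once those identifications are in place, the claim is an immediate consequence of part~\ref{item_almost_independent} of Lemma~\ref{lm:ST} together with the union bound.
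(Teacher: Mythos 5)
Your proposal matches the paper's proof essentially line for line: fix $G\in X$ and a witnessing coloring, interpret each color class as a vertex subset $I_j$ of $H$, note that monochromatic $F$-copies in color $j$ are exactly the edges of $H[I_j]$, verify $M=\eps\tau^\ell e(H)$, invoke Lemma~\ref{lm:ST}\ref{item_almost_independent} to find a container $C_{a_j}\supset I_j$ for each $j$, and conclude $G\subseteq C_{a_1}\cup\dots\cup C_{a_q}$ before applying the union bound. The only difference is that you explicitly spell out the identity $e(H)=(n)_t/\aut(F)$, which the paper leaves implicit.
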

  
  \begin{proof}
    Suppose that $G\in X$. By definition, there
    exists a $q$-coloring of the edges of $G$, say with colors $1, 2, \dots, q$, which contains at
    most~$\monosize$ copies of~$F$ in each color. For any color 
    class $j$, let $I_j$ denote the set of vertices of $H$ which 
    correspond to edges of color $j$ in $G$. Since each edge in 
    $H[I_{j}]$ corresponds to a copy of $F$ in color $j$, we 
    have $e(H[I_{j}]) \leq \monosize$. Note that
    \[
    \monosize = \eps \tau^\ell e(H),
    \]
    which means that each~$I_{j}$ satisfies the condition~\eqref{item_almost_independent} of
    Lemma~\ref{lm:ST}. Therefore, for each color class $j$,
    there must be a set $C_{a_j} \in \cC$ such that $C_{a_j} \supset I_{j}$. 
    Since $G = \bigcup_j I_j$, this implies that $G\in E_{a_1, \dots, a_q}$. 
    Since $G\in X$ was arbitrary, the bound
    \eqref{eq:union_bound_X} follows and the claim is proved.
  \end{proof}

  Owing to Claim~\ref{claim:E_ab}, we now bound $\PR(E_{a_1, \dots, a_q})$.
  Recalling the definition of the event $E_{a_1, \dots, a_q}$, we note that
  \begin{equation}\label{eq:prob_Eab}
    \PR(E_{a_1, \dots, a_q}) = (1-p)^{|V(H) \setminus (C_{a_1} \cup \dots \cup C_{a_q})|}.
  \end{equation}
  Hence, we shall estimate $|V(H) \setminus (C_{a_1} \cup \dots \cup C_{a_q})|$
  to derive a bound for $\PR(X)$ by~\eqref{eq:union_bound_X}.

  \begin{claim}\label{claim:complement_is_large}
    For all choices $1\leq a_1, \dots, a_q \leq |\cC|$ we have
    \begin{equation*}\label{eq:complement_is_large}
      |V(H) \setminus (C_{a_1} \cup \dots \cup C_{a_q})| \geq \frac{1}{2}\Bigl( \frac{n}{R} \Bigr)^k.
    \end{equation*}
  \end{claim}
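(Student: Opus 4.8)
The plan is to argue by contradiction. Suppose that for some indices $a_1,\dots,a_q$ the set
\[
U\ :=\ V(H)\setminus\bigl(C_{a_1}\cup\dots\cup C_{a_q}\bigr)
\]
of $k$-subsets of $[n]$ lying in none of the chosen containers satisfies $|U|<\tfrac12(n/R)^k$. I will then exhibit a single container $C_{a_i}$ with $e(H[C_{a_i}])>\eps e(H)$, contradicting part~\ref{item_few_edges_in_C} of Lemma~\ref{lm:ST}. The mechanism forcing many copies of $F$ into one container is Ramsey's theorem, run inside a balanced blow-up of $K_R^{(k)}$.

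First I would note that $R=\nirnumber{F;q}$ divides $n$: the value of $n$ in~\eqref{eq:values} contains $R$ to a positive power. Hence $[n]$ splits into blocks $V_1,\dots,V_R$ each of size exactly $m:=n/R$, so in particular $|U|<\tfrac12 m^k$. Call a tuple $(r_1,\dots,r_R)\in V_1\times\dots\times V_R$ \emph{good} if every $k$-subset of $\{r_1,\dots,r_R\}$ lies in some $C_{a_i}$. Only members of $U$ meeting each block in at most one vertex can spoil goodness, and each such set is a $k$-subset of exactly $m^{R-k}$ tuples, so the number of bad tuples is at most $|U|\,m^{R-k}<\tfrac12 m^k\cdot m^{R-k}=\tfrac12 m^R$. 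Thus more than $\tfrac12 m^R$ of the $m^R$ tuples are good.

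Next, to a good tuple $(r_1,\dots,r_R)$ associate the colouring $\chi(e)=\min\{i : e\in C_{a_i}\}$ of the $k$-subsets $e$ of $\{r_1,\dots,r_R\}$. This is a $q$-colouring of $K_R^{(k)}$, so by the definition of $R=\nirnumber{F;q}$ it contains a monochromatic copy of $F$; all $\ell$ edges of that copy lie in one common $C_{a_i}$, so the copy is an edge of $H[C_{a_i}]$, and its $t$ vertices meet each block in at most one vertex. Since any copy of $F$ whose $t$ vertices meet each block in at most one vertex is contained in at most $m^{R-t}$ tuples, double-counting the incidences between good tuples and the monochromatic copies they host yields more than $\tfrac12 m^R/m^{R-t}=\tfrac12 m^t=n^t/(2R^t)$ distinct copies of $F$ that each lie entirely inside some $C_{a_i}$. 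Summing over the $q$ containers and pigeonholing, some $C_{a_i}$ satisfies $e(H[C_{a_i}])>n^t/(2qR^t)$. Since the number of copies of $F$ is at most the number of injections $V(F)\to[n]$, we have $e(H)\le (n)_t\le n^t$, hence $n^t/(2qR^t)\ge e(H)/(2qR^t)=\eps e(H)$, and therefore $e(H[C_{a_i}])>\eps e(H)$ — contradicting part~\ref{item_few_edges_in_C} of Lemma~\ref{lm:ST}. This gives the claim.

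The one point that genuinely requires care is that this chain is \emph{lossless}: the factor $\tfrac12$ in the statement is spent exactly against the estimate $e(H)\le n^t$, with no room to spare. This is precisely why the blocks must have size exactly $n/R$ — which is what the divisibility $R\mid n$ buys us — rather than $\lfloor n/R\rfloor$; a rounding factor of the shape $(1+o(1))$ would no longer be absorbable. Apart from that, the argument only invokes the harmless fact (already recorded in the proof) that $R\ge t$, so that $K_R^{(k)}$ really does contain copies of $F$.
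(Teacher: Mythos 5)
Your proof is correct. It reaches the same conclusion as the paper but by a genuinely different route: you partition $[n]$ into $R$ blocks of size $m=n/R$ and count ordered transversal tuples in $V_1\times\dots\times V_R$, whereas the paper counts unordered $R$-subsets $A\subset[n]$. The paper's argument is direct: it double-counts pairs $(e,A)$ with $e\in\bigcup_i E(H[C_{a_i}])$ and $\bigcup_{x\in e}x\subset A$ to show the family $\cA$ of $R$-sets $A$ with $\binom{A}{k}\subset C_{a_1}\cup\dots\cup C_{a_q}$ has size at most $\tfrac12\binom{n}{R}$, and then converts the at least $\tfrac12\binom{n}{R}$ remaining $R$-sets into $k$-sets outside the containers. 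Your argument is the contrapositive: assume the complement is small, deduce that more than half the transversal tuples are ``good'', run the Ramsey argument on each good tuple to extract a copy of $F$ inside some container, double-count tuple--copy incidences, and contradict Lemma~\ref{lm:ST}\ref{item_few_edges_in_C}. Both hinge on the same two ingredients — $R=\nirnumber{F;q}$ forcing a monochromatic $F$ and the container sparsity bound $e(H[C])\le\eps e(H)$ — so the mathematical core is the same. What differs is the bookkeeping: the paper relies on the clean binomial identity $(n)_t\binom{n-t}{R-t}=(R)_t\binom{n}{R}$ and needs no divisibility assumption, while yours trades that for the cruder $e(H)\le n^t$ but must verify $R\mid n$ (which you correctly note does hold for the specific $n$ in~\eqref{eq:values}). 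Your ``lossless'' remark is slightly overstated — the bound $e(H)=(n)_t/\aut(F)\le n^t$ actually has some slack — but the point about needing $m=n/R$ exactly (not $\lfloor n/R\rfloor$) is valid, since the target bound $\tfrac12(n/R)^k$ is stated in terms of $n/R$, not $\lfloor n/R\rfloor$.
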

  
  \begin{proof} Let $a_1, \dots, a_q$ be fixed and set
    \begin{equation}\label{eq:family_A}
      \cA = \biggl\{ A \in \binom{[n]}{R} \: : \: \binom{A}{k} \subset C_{a_1} \cup \dots \cup C_{a_q} \biggr\}.
    \end{equation}
    By the definition of~$R = \nirnumber{F;q}$, for each
    set~$A \in \cA$ there is an index $j = j(A) \in [q]$ such that
    $C_{a_j}$ contains a copy of~$F$ with vertices from~$A$. The
    element $e \in E(C_{a_j})$ that corresponds to
    this copy of~$F$ satisfies $e \subset \binom{A}{k}$ and,
    thus, $\bigcup_{x \in e} x \subset A$. We now give an upper
    bound for $|\cA|$ by counting the number of pairs in
    \[
    \cP = \biggl\{ (e, A) \in \bigcup_{i = 1}^q E\bigl(C_{a_i}\bigr)
    \times \cA \text{ with } \bigcup_{x \in e} x \subset A \biggr\}.
    \]
    On the one hand, we have already established that $|\cP| \geq |\cA|$. On
    the other hand, for any fixed $e \in E(H)$, we have
    $|\bigcup_{x \in e} x| = |V(F)| = t$ and, therefore, there are at
    most~$\binom{n - t}{R - t}$ sets~$A \supset \bigcup_{x \in e}
    x$. It follows that
    \begin{equation}\label{eq:pair_count}
      \begin{split}
        |\cA| \leq |\cP| & \leq \biggl|\bigcup_{i = 1}^q
        E\bigl(C_{a_i}\bigr)\biggr| \binom{n - t}{R - t}
        \overset{\eqref{item_few_edges_in_C}}{\leq} q \eps e(H)
        \binom{n - t}{R - t}\\ & \overset{\eqref{eq:values}}{=}
        \frac{e(H)}{2R^t} \binom{n - t}{R - t} \leq \frac{(n)_t}{2R^t}
        \binom{n - t}{R - t} \leq \frac{1}{2} \binom{n}{R}.
      \end{split}
    \end{equation}
    By definition, each~$A \in \binom{[n]}{R} \setminus \cA$
    satisfies~$\binom{A}{k} \nsubset C_{a_1}\cup\dots\cup C_{a_q}$.
    Hence, $V(H)\setminus (C_{a_1}\cup\dots\cup C_{a_q})$
    intersects~$\binom{A}{k}$.  Since an element of~$V(H)$ can appear
    in at most~$\binom{n - k}{R - k}$ sets~$A$, it follows
    from~\eqref{eq:pair_count} that there are at least
    \[
    \frac{1}{2} \binom{n}{R} \biggl / \binom{n-k}{R-k} \geq
    \frac{1}{2}\Bigl( \frac{n}{R} \Bigr)^k
    \]
    elements in $V(H) \setminus (C_{a_1} \cup \dots \cup C_{a_q})$, as required. 
  \end{proof}
  
  In view of Claim~\ref{claim:complement_is_large}, our choice of~$p =
  1000 R^k q \alpha$, where $\alpha = n^{-1/2\ell + 1/4\ell(\ell+1)}$,
  and~\eqref{eq:prob_Eab}, we have, for any~$C_{a_1}, \dots, C_{a_q} \in \cC$,
  \begin{equation}\label{eq:bound-on-p}
  \begin{split}
    \PR(E_{a_1, \dots, a_q}) &\leq (1-p)^{(n/R)^k/2}\\
    &\leq \exp\bigl(-pn^k/2R^k\bigr) = \exp\bigl(-(1000R^k q \alpha)n^k/2R^k\bigr) \\
    &= e^{-500 q \alpha n^k} \leq e^{-1000 q \alpha N},
  \end{split}
  \end{equation}
  where, in the last step, we used $N = \binom{n}{k} \leq \frac{n^k}{2}$.
  Therefore,~\eqref{eq:union_bound_X} and~\eqref{eq:bound-on-p}
  together with the bound on~$|\cC|$ given by
  Lemma~\ref{lm:ST}(\ref{item_size_cC}) imply that
  \[
  \begin{split}
    \PR(X) &\leq \sum_{C_{a_1}, \dots, C_{a_q} \in \cC} \PR(E_{a_1, \dots, a_q})\ 
    \leq |\cC|^q e^{-1000 q \alpha N}\\
    &\leq \exp\bigl(1000 q \ell!^3 \ell \log(1/\eps) N \tau \log(1/\tau)
    -1000 q \alpha N \bigr) \\
    &= \exp\bigl(1000 q N\tau (\ell!^3 \ell \log(1/\eps) \log(1/\tau) - \alpha/\tau)
    \bigr)\\
    &\leq\exp\bigl( 1000 q N\tau (\ell!^3 \log^2{n} - n^{1/(4\ell(\ell+1))})
    \bigr) \leq 1/4,\\
  \end{split}
  \]
  where we used that $n$ satisfies~\eqref{eq:n3}.

  Now, by Markov's inequality, with probability at least $1/2$, the
  number of noninduced copies of~$F$ in $G$ will be at most twice the
  expected number of copies, which is fewer than
  \[
  \begin{split}
    2 p^{\ell+1}\frac{(n)_t}{\aut(F)}\binom{t}{k}
    &= 2(1000 q)^{\ell+1}R^{k(\ell+1)}n^{-1/2-1/(4\ell)}\frac{(n)_t}{\aut(F)}\binom{t}{k}\\
    &< \frac{1}{2qR^t}(n^{-1/(2\ell)})^\ell \frac{(n)_t}{\aut(F)} =
    \eps \tau^\ell \frac{(n)_t}{\aut(F)} = M,
  \end{split}
  \]
  where the inequality above follows from~\eqref{eq:n4}. In other words, $\PR(\overline{Y})\geq 1/2$ and, therefore, $\PR(\overline{X} \cap \overline{Y})
  \geq 1/4$, so there exists a graph $G$ such that $\overline{X} \cap \overline{Y}$ holds. By our
  earlier observations, this completes the proof.
  \end{proof}

\section{Concluding remarks}

Beginning with Fox and Sudakov~\cite{FS08}, much of the recent work on induced Ramsey numbers for graphs has used pseudorandom rather than random graphs for the target graph $G$. The results of this paper rely very firmly on using random hypergraphs. It would be interesting to know whether comparable bounds could be proved using pseudorandom hypergraphs.

It would also be interesting to prove comparable bounds for the following variant of the induced Ramsey theorem, first proved by Ne\v set\v ril and R\"odl~\cite{NR75}: for every graph $F$, there exists a graph $G$ such that every $q$-coloring of the triangles of $G$ contains an induced copy of $F$ all of whose triangles receive the same color. By taking $F = K_t$ and $q = 4$, we see that $|G|$ may need to be double exponential in $|F|$. We believe that a matching double-exponential upper bound should also hold.

\begin{bibdiv}
\begin{biblist}
\bib{AH78}{article}{
   author={Abramson, Fred G.},
   author={Harrington, Leo A.},
   title={Models without indiscernibles},
   journal={J. Symbolic Logic},
   volume={43},
   date={1978},
   number={3},
   pages={572--600},
   issn={0022-4812},
   review={\MR{503795}},
   doi={10.2307/2273534},
}

\bib{BMS15}{article}{
   author={Balogh, J{\'o}zsef},
   author={Morris, Robert},
   author={Samotij, Wojciech},
   title={Independent sets in hypergraphs},
   journal={J. Amer. Math. Soc.},
   volume={28},
   date={2015},
   number={3},
   pages={669--709},
   issn={0894-0347},
   review={\MR{3327533}},
   doi={10.1090/S0894-0347-2014-00816-X},
}

\bib{CFS12}{article}{
   author={Conlon, David},
   author={Fox, Jacob},
   author={Sudakov, Benny},
   title={On two problems in graph Ramsey theory},
   journal={Combinatorica},
   volume={32},
   date={2012},
   number={5},
   pages={513--535},
   issn={0209-9683},
   review={\MR{3004807}},
   doi={10.1007/s00493-012-2710-3},
}

\bib{CFS13}{article}{
   author={Conlon, David},
   author={Fox, Jacob},
   author={Sudakov, Benny},
   title={An improved bound for the stepping-up lemma},
   journal={Discrete Appl. Math.},
   volume={161},
   date={2013},
   number={9},
   pages={1191--1196},
   issn={0166-218X},
   review={\MR{3030610}},
   doi={10.1016/j.dam.2010.10.013},
}

\bib{CFS15}{article}{
   author={Conlon, David},
   author={Fox, Jacob},
   author={Sudakov, Benny},
   title={Recent developments in graph Ramsey theory},
   conference={
      title={Surveys in combinatorics 2015},
   },
   book={
      series={London Math. Soc. Lecture Note Ser.},
      volume={424},
      publisher={Cambridge Univ. Press, Cambridge},
   },
   date={2015},
   pages={49--118},
   doi={10.1017/CBO9781316106853.003},
}

\bib{D75}{article}{
   author={Deuber, W.},
   title={Generalizations of Ramsey's theorem},
   conference={
      title={Infinite and finite sets (Colloq., Keszthely, 1973; dedicated
      to P. Erd\H os on his 60th birthday), Vol. I},
   },
   book={
      publisher={North-Holland, Amsterdam},
   },
   date={1975},
   pages={323--332. Colloq. Math. Soc. J\'anos Bolyai, Vol. 10},
   review={\MR{0369127}},
}

\bib{E84}{article}{
   author={Erd{\H{o}}s, Paul},
   title={On some problems in graph theory, combinatorial analysis and
   combinatorial number theory},
   conference={
      title={Graph theory and combinatorics},
      address={Cambridge},
      date={1983},
   },
   book={
      publisher={Academic Press, London},
   },
   date={1984},
   pages={1--17},
   review={\MR{777160}},
}

\bib{EHP75}{article}{
   author={Erd{\H{o}}s, P.},
   author={Hajnal, A.},
   author={P{\'o}sa, L.},
   title={Strong embeddings of graphs into colored graphs},
   conference={
      title={Infinite and finite sets (Colloq., Keszthely, 1973; dedicated
      to P. Erd\H os on his 60th birthday), Vol. I},
   },
   book={
      publisher={North-Holland, Amsterdam},
   },
   date={1975},
   pages={585--595. Colloq. Math. Soc. J\'anos Bolyai, Vol. 10},
   review={\MR{0382049}},
}

\bib{ER52}{article}{
   author={Erd{\H{o}}s, P.},
   author={Rado, R.},
   title={Combinatorial theorems on classifications of subsets of a given
   set},
   journal={Proc. London Math. Soc. (3)},
   volume={2},
   date={1952},
   pages={417--439},
   issn={0024-6115},
   review={\MR{0065615}},
}

\bib{FS08}{article}{
   author={Fox, Jacob},
   author={Sudakov, Benny},
   title={Induced Ramsey-type theorems},
   journal={Adv. Math.},
   volume={219},
   date={2008},
   number={6},
   pages={1771--1800},
   issn={0001-8708},
   review={\MR{2455625}},
   doi={10.1016/j.aim.2008.07.009},
}

\bib{FS09}{article}{
   author={Fox, Jacob},
   author={Sudakov, Benny},
   title={Density theorems for bipartite graphs and related Ramsey-type
   results},
   journal={Combinatorica},
   volume={29},
   date={2009},
   number={2},
   pages={153--196},
   issn={0209-9683},
   review={\MR{2520279}},
}

\bib{GRS90}{book}{
   author={Graham, Ronald L.},
   author={Rothschild, Bruce L.},
   author={Spencer, Joel H.},
   title={Ramsey theory},
   series={Wiley-Interscience Series in Discrete Mathematics and
   Optimization},
   edition={2},
   note={A Wiley-Interscience Publication},
   publisher={John Wiley \& Sons, Inc., New York},
   date={1990},
   pages={xii+196},
   isbn={0-471-50046-1},
   review={\MR{1044995}},
}

\bib{KPR98}{article}{
   author={Kohayakawa, Y.},
   author={Pr{\"o}mel, H. J.},
   author={R{\"o}dl, V.},
   title={Induced Ramsey numbers},
   journal={Combinatorica},
   volume={18},
   date={1998},
   number={3},
   pages={373--404},
   issn={0209-9683},
   review={\MR{1721950}},
   doi={10.1007/PL00009828},
}

\bib{NS15}{article}{
   author={Nenadov, Rajko},
   author={Steger, Angelika},
   title={A short proof of the random Ramsey theorem},
   journal={Combin. Probab. Comput.},
   volume={25},
   date={2016},
   number={1},
   pages={130--144},
   issn={0963-5483},
   review={\MR{3438289}},
   doi={10.1017/S0963548314000832},
}

\bib{NR75}{article}{
   author={Ne{\v{s}}et{\v{r}}il, Jaroslav},
   author={R{\"o}dl, Vojt{\v{e}}ch},
   title={Partitions of subgraphs},
   conference={
      title={Recent advances in graph theory},
      address={Proc. Second Czechoslovak Sympos., Prague},
      date={1974},
   },
   book={
      publisher={Academia, Prague},
   },
   date={1975},
   pages={413--423},
   review={\MR{0429655}},
}

\bib{NR77}{article}{
   author={Ne{\v{s}}et{\v{r}}il, Jaroslav},
   author={R{\"o}dl, Vojt{\v{e}}ch},
   title={Partitions of finite relational and set systems},
   journal={J. Combinatorial Theory Ser. A},
   volume={22},
   date={1977},
   number={3},
   pages={289--312},
   review={\MR{0437351}},
}

\bib{R30}{article}{
   author={Ramsey, F. P.},
   title={On a problem of formal logic},
   journal={Proc. London Math. Soc. (2)},
   volume={30},
   date={1930},
   number={1},
   pages={264--286},
   doi={10.1112/plms/s2-30.1.264},
}

\bib{R73}{thesis}{
   author={R{\"o}dl, Vojt{\v{e}}ch},
   title={The dimension of a graph and generalized Ramsey theorems},
   type={Master's thesis},
   organization={Charles University},
   date={1973},
}

\bib{RRS15}{article}{
   author={R{\"o}dl, Vojt{\v{e}}ch},
   author={Ruci{\'n}ski, Andrzej},
   author={Schacht, Mathias},
   title={An exponential-type upper bound for Folkman numbers},
   journal={Combinatorica},
   note={to appear},
   doi={10.1007/s00493-015-3298-1},
}

\bib{ST15}{article}{
   author={Saxton, David},
   author={Thomason, Andrew},
   title={Hypergraph containers},
   journal={Invent. Math.},
   volume={201},
   date={2015},
   number={3},
   pages={925--992},
   issn={0020-9910},
   review={\MR{3385638}},
   doi={10.1007/s00222-014-0562-8},
}

\end{biblist}
\end{bibdiv}

\end{document}